\documentclass[11pt]{article}
\usepackage{graphicx}
\usepackage{amsmath}
\usepackage{amssymb}
\usepackage{mathptmx}
\usepackage{graphicx}
\usepackage{esint}
\usepackage{breakurl}
\usepackage{xcolor}
\usepackage{algpseudocode}
\usepackage{url}
\usepackage{comment}
\usepackage[active]{srcltx}

\usepackage{tikz}
\usepackage{pgfplots}
\pgfplotsset{compat=1.15}
\usepackage{mathrsfs}
\usetikzlibrary{arrows}
\usepackage{circuitikz}
\usepackage{ulem}
\usetikzlibrary{tikzmark,decorations.pathmorphing,arrows,patterns}

\usepackage{latexsym, amsmath, amsfonts, amscd, amssymb, verbatim}
\usepackage{latexsym, amsmath, amsfonts, amscd, amssymb, verbatim}

\usepackage[margin=3cm]{geometry} 

\usepackage{color}
\usepackage{graphicx}

\newtheorem{Theorem}{Theorem}[section]
\newtheorem{Proposition}{Proposition}[section]
\newtheorem{Remark}{Remark}[section]

\normalsize

\newcommand{\HH}{\mathcal H}

\makeatletter
\renewcommand\@biblabel[1]{#1.}
\newcommand{\qed}{\hfill $\blacksquare$}
\makeatother

\begin{document}

\title{
The Popov's  Algorithm with
Optimal Bounded Stepsize for Generalized Monotone Variational Inequalities
%\footnote{Dedicate to Professor Simeon Reich  on the occasion of his 80th birthday.}
}

	\author{Nhung Hong  Nguyen\thanks{Faculty of Applied Sciences, HCMC University of Technology and Engineering, Ho Chi Minh City, Vietnam. Email:nhungnh@hcmute.edu.vn}, 
    \quad  Thanh Quoc Trinh\thanks{Faculty of Basic Sciences, Van Lang University, Ho Chi Minh City, Vietnam. Email: thanh.tq@vlu.edu.vn},  
    \quad
    Phan Tu Vuong\thanks{School of Mathematical Sciences,
		University of Southampton, SO17 1BJ, Soutampton, UK.Email: t.v.phan@soton.ac.uk    } }
\maketitle

\medskip
\begin{quote}
\noindent {\bf Abstract.} 
For solving constrained (pseudo)-monotone variational inequality, we prove that the upper bound of stepsize $\frac{1}{2L}$ established for the Popov's algorithm and the forward-reflected-backward algorithm is tight. For unconstrained case, we can enlarge the upper bound to $\frac{1}{\sqrt{3}L}$
and show that this upper bound is also tight.
The convergence analysis is carried out by using a new Lyapunov-type function.

\medskip
\noindent {\bf Mathematics Subject Classification (2010).}\ 47J20,
49J40, 49M30.

\medskip
\noindent {\bf Keywords.} Popov's algorithm, Optimal stepsize, Variational inequality. 
\end{quote}

\section{Introduction}
Let $\HH$ be a real Hilbert space with the inner product $\left< \cdot,\cdot\right>$ and a generated norm $\|\cdot\|$. 
Let $K \subset \HH$ be a nonempty, closed and convex set, let $F$ be a continuous operator from $\HH$ to $\HH$. 
In this paper, we revisit some projection algorithms for solving the variational inequality: Find $u^* \in K$ such that
\begin{equation}  \label{vi}
     \langle F(u^*), u-u^* \rangle \ge 0 \quad \forall u \in K. 
\end{equation}  
  When $K=\HH$, the variational inequality $VI(F,K)$ reduces to the nonlinear
equation: Find $u^*\in \HH$ such that
\begin{equation}\label{ME}
	F(u^*)=0.
\end{equation} 
For a comprehensive study of variational inequality and its applications, we refer the readers to excellent books \cite{FacchineiPang03,KinderlehrerStampacchia80}.

One of the oldest but popular methods for solving variational inequality \eqref{vi} is the extragradient algorithm proposed by Korpelevich \cite{Korpelevich1976}, which generates the iterative sequences by 
\begin{equation}\label{Ko}
    \begin{cases}
  v^{k} &= P_K (u^k -\lambda F(u^k)),\\
  u^{k+1} &= P_K (u^{k}-\lambda F(v^k)),  
\end{cases}
\end{equation}
with $u^0 \in \HH$ and stepsize $\lambda \in \left( 0, \frac{1}{L} \right)$, where $L$ is the modulus of Lipschitz continuity of the {\it monotone} operator $F$ and $P_K$ is the projection onto the closed convex set $K$. It is well understood that the range of stepsize significantly effects the convergence speed. Typically, the larger stepsize provides faster convergence. 
However, if the stepsize is too large then the algorithm may diverge. 
Hence, one of the interesting research questions is to find the maximal upper bound for the stepsize. It was proved in \cite{Khanh16,Xu} that the upper bound $\frac{1}{L}$ of the extragradient algorithm \eqref{Ko} is tight, i.e. cannot be enlarged without losing the convergence.

One notable variation of the extragradient algorithm for solving monotone inequality is the Tseng's forward-backward-forward algorithm \cite{Tseng}, which requires only one projection
\begin{equation}\label{Ts}
    \begin{cases}
  v^{k} &= P_K (u^k -\lambda F(u^k),\\
  u^{k+1} &= u^{k}-\lambda F(v^k)+ \lambda F(u^k).  
\end{cases}
\end{equation}
Another variation of the extragradient algorithm is the extragradient subgradient algorithm proposed in \cite{Gibali}, where the authors replaced the second projection in the extragradient algorithm by a suitable projection onto a half space. 
The convergence analysis of the Tseng's algorithm \cite{Tseng} and the extragradient subgradient algorithm  \cite{Gibali} also requires the stepsize $\lambda \in \left( 0, \frac{1}{L} \right)$. 
The convergence of these extragradient-type algorithms in Hilbert spaces with {\it pseudo-monotone} operator was obtained in \cite{BCV,PTVuong2018}.
Since the extragradient algorithm \eqref{Ko} coincides with the Tseng's algorithm and the extragradient subgradient algorithm in the unconstrained case, examples given in \cite{Khanh16,Xu} also showed that the upper bound $\frac{1}{L}$ is tight for these algorithms. 

Popov's algorithm proposed in \cite{Popov1980} is an improved version of extragradient algorithm  with an advantage that  each iteration requires only one operator evaluation instead of two as in the extragradient-type algorithms. The Popov's algorithm computes the iterative sequences as   
\begin{equation} \label{PP}
    \begin{cases}
  u^{k+1} = P_K (u^k -\lambda F(v^k)),\\
  v^{k+1} = P_K (u^{k+1}-\lambda F(v^k)).  
\end{cases}
\end{equation}
with $u^0, v^0 \in H$ and $\lambda >0$. 
To guarantee convergence of the Popov  algorithm \eqref{PP}, the bound $\lambda <\frac{1}{3L}$ was required in the original analysis \cite{Popov1980}. This upper bounded has been enlarged to $\frac{\sqrt{2}-1}{L}$ in \cite{Hieu,Xu} and then $\frac{1}{2L}$ (see \cite{Vuong}). 
These upper bounds are considerable smaller than the optimal upper bound $\frac{1}{L}$ of the extragradient-type algorithms. It is still open if the upper bound $\frac{1}{2L}$ of the Popov's algorithm is tight or not. 

One can also combine the two projections in the Popov's algorithm to obtain a single projection single operator evaluation algorithm
\begin{equation}\label{FrB}
  u^{k+1} = P_K (u^k -\lambda (2F(u^k)-F(u^{k-1}))),  
\end{equation}
which is called forward-reflected-backward algorithm \cite{Malitsky2}.
%or the past extragradient algorithm \cite{Axel}. 
Again, the convergence analysis requires stepsize $\lambda \in (0,\frac{1}{2L})$ and it is still  unclear whether the upper bound $\frac{1}{2L}$ is tight or not.

The first aim of this technical note is to prove that the upper bound $\frac{1}{2L}$ of the stepsize in Popov algorithm \eqref{PP} and forward-reflected-backward algorithm \eqref{FrB} is indeed tight. Specifically,  we will construct an example of operator $F$ and a suitable convex set $K$ such that these algorithms do not converge with stepsize $\lambda = \frac{1}{2L}$. Our example is applied only for the constrained case, i.e. $K$ is a real subset of $\HH$.

In the unconstrained case, i.e. $K=\HH$, surprisingly, the upper bound can be enlarged to $\frac{1}{\sqrt{3}L}$. 
Note that in this case, the Popov algorithm \eqref{PP} coincides with the projected reflected gradient method \cite{Malitsky} and the optimistic-gradient descent-ascent (OGDA) (see \cite{Daskalakis}), which reads as
$$
u^{k+1} = u^k -\lambda F(v^k), \quad  v^{k+1} = 2u^{k+1}-u^k,
$$
or equivalently 
\begin{equation}\label{PPu}
  u^{k+1} = u^k -\lambda F(2u^k-u^{k-1}). 
\end{equation}
If, in addition, $F$ is a linear operator, then the Popov algorithm also coincides with the forward-reflected-backward algorithm \eqref{FrB}. 

The second aim of this note is to claim that the Popov's algorithm still converges with  stepsize $\lambda \in (0,\frac{1}{\sqrt{3}L})$ for unconstrained case. More importantly, we show that the upper bound $\frac{1}{\sqrt{3}L}$ is tight by constructing an example confirming that the Popov's algorithm \eqref{PPu} does not converges with stepsize $\lambda = \frac{1}{\sqrt{3}L}$.

\section{Main results}
We make the following common assumptions on problem \eqref{vi}:
\begin{itemize}
    \item[(A1)] The solution set of problem \eqref{ME} denoted by $S^*$ is nonempty;
    \item[(A2)] $F\colon \HH \to \HH$ is $L$--Lipschitz-continuous for some $L>0$; 
    \item[(A3)] The operator $F$ is quasar-monotone, i.e. 
    $$
    \langle F(u),u-u^*\rangle \ge 0 \quad \forall u^* \in S^*.
    $$   
\end{itemize}
Assumption (A3) is strictly weaker than
(pseudo)-monotonicity \cite{NhungThanhVuong}. Recall that $F$ is
pseudo-monotone if
\begin{equation*}
  \langle F(v), u-v \rangle \geq 0 \quad \Longrightarrow \quad \langle F(u), u-v \rangle \ge 0\quad \forall u,v \in \HH,
\end{equation*}
and $F$ is monotone if $\langle F(u)-F(v), u-v\rangle \ge 0 \quad \forall u, v \in \HH$.\\
\subsection{The tightness of stepsize upper bound $\frac{1}{2L}$ for constrained VI}
In this section, we will construct an example to show that the upper bound $\frac{1}{2L}$ of the stepsize $\lambda$ in Popov algorithm \eqref{PP} cannot be enlarged.
\begin{Theorem}
If 
$\lambda = \frac{1}{2L}$ then there exists a monotone and Lipchitz continuous operator $F$ and a closed convex set $K$ such that the Popov iterations \eqref{PP} do not converge to a solution.
\end{Theorem}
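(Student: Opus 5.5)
The plan is to build an explicit counterexample in $\HH=\mathbb{R}^2$ (or $\mathbb{R}$ if possible) with an affine monotone operator and a simple polyhedral set $K$. I will look for a \emph{periodic orbit} of the Popov map $(u^k,v^k)\mapsto(u^{k+1},v^{k+1})$ in \eqref{PP} that does not contain a solution. Such an orbit cannot converge. No nonconstant periodic orbit can converge, and a constant one would have $u=v=P_K(u-\lambda F(u))$, making $u$ a solution, which is excluded by construction.

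\textbf{Step 1 (rule out the unconstrained case).} First I check that no such example exists without constraints. For $K=\HH$ and linear $F$, the iteration matrix has spectral radius below one at $\lambda=\frac{1}{2L}$; for instance $F(u)=Lu$ on $\mathbb{R}$ gives eigenvalues $\pm 1/\sqrt2$. This is consistent with the paper's later claim that the unconstrained bound is $\frac{1}{\sqrt3 L}$. So the projection must be active along the orbit, and the example must exploit the nonlinearity of $P_K$.

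\textbf{Step 2 (choose the data).} I will take $F(u)=L\,Au+b$ with $A$ either a rotation by $\pi/2$ (skew, hence monotone with constant exactly $L$) or a rank-one symmetric positive semidefinite matrix. I will take $K$ to be a half-plane or a strip $\{u:\ \alpha\le u_1\le\beta\}$, so that each projection onto a face is an affine map. Fixing in advance which constraint is active at each half-step (for example, $u^{k}$ on one face and $v^{k}$ on the opposite face, alternating), each half-step becomes an affine map. A period-$2$ (or period-$4$) orbit then reduces to a finite linear system in the unknown points and the parameters $b,\alpha,\beta$, with $\lambda L=\frac12$.

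\textbf{Step 3 (solve and verify).} I will solve that linear system and choose the free parameters so that the following hold.
\begin{enumerate}
\item[(i)] The assumed active sets are consistent: the unprojected points $u^k-\lambda F(v^k)$ and $u^{k+1}-\lambda F(v^k)$ lie in the normal cones of the chosen faces.
\item[(ii)] The orbit points differ from the unique solution $u^*$ of the VI, which I compute directly.
\end{enumerate}
I expect $\lambda=\frac{1}{2L}$ to be the threshold at which the relevant determinant vanishes. That would explain why the cycle exists exactly at this stepsize and why the Lyapunov argument of \cite{Vuong} fails only there. Finally, I will check monotonicity and $L$-Lipschitz continuity of $F$; both are immediate for an affine map with $\|A\|=1$ and $A+A^\top\succeq 0$. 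The same orbit also serves for \eqref{FrB} after rewriting it in the single-sequence form.

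\textbf{Main obstacle.} The hard part is Step 3, finding active-set patterns that are simultaneously consistent. I will first try the one-dimensional case $K=[0,\infty)$ with $F(u)=L(u-c)$, $c<0$, so that $u^*=0$. I will look for a $2$-cycle in which $u^k$ sits at $0$ while $v^k$ oscillates. If one dimension is too rigid, I will move to the rotation operator on a strip and search for a $4$-cycle bouncing between the two faces, tuning the strip width so that the cycle closes exactly when $\lambda L=\frac12$.
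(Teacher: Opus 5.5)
\textbf{Verdict: genuine gap, and it cannot be filled.} Your proposal never produces an example; all of the content sits in Step 3. Moreover, in finite dimensions the cycle you are searching for does not exist.

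\textbf{Why no cycle exists at $\lambda L=\tfrac12$.} Write $a_k=\|u^{k+1}-v^k\|$, $b_k=\|u^k-v^k\|$ and $d_k=\|v^k-v^{k-1}\|$. Test the projection inequality for $u^{k+1}$ at $u^*$, test the one for $v^k$ at $u^{k+1}$, and use $\langle F(v^k),v^k-u^*\rangle\ge 0$. This gives the constrained version of the first half of the proof of Proposition~\ref{SequenceProperty}: $\|u^{k+1}-u^*\|^2\le\|u^k-u^*\|^2-a_k^2-b_k^2+d_ka_k$. Since $u^k$ and $v^{k-1}$ are the projections of $u^{k-1}-\lambda F(v^{k-1})$ and $u^{k-1}-\lambda F(v^{k-2})$, nonexpansiveness gives $a_{k-1}\le\tfrac12 d_{k-1}$. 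Also $d_k\le b_k+a_{k-1}$. Hence $\Psi_k=\|u^k-u^*\|^2+\tfrac14 d_{k-1}^2$ satisfies, for every closed convex $K$,
\[
\Psi_{k+1}-\Psi_k\le -b_k^2+\tfrac12 d_k^2-\tfrac14 d_{k-1}^2\le a_{k-1}^2-\tfrac14 d_{k-1}^2\le 0 .
\]
On a periodic orbit $\Psi$ is constant, so every inequality above is tight. This forces $a_k=b_{k+1}=\tfrac12 d_k$, $d_k\equiv d$ and $u^k=\tfrac12(v^{k-1}+v^k)$. If $d>0$, it also forces $u^{k+1}-v^k=\lambda(F(v^{k-1})-F(v^k))$, i.e.\ $v^{k+1}+\tfrac1L F(v^k)\equiv c$. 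Then $v^{k+1}=P_K(u^{k+1}-\lambda F(v^k))$ reads $\tfrac12(v^k-c)\in N_K(v^{k+1})$. Testing this at $u^{k+1}\in K$ gives $\|w^{k+1}-w^k\|^2\le\|w^{k+1}\|^2-\|w^k\|^2$ for $w^k=v^k-c$. Summing over one period yields $d=0$. So the orbit is stationary, and by your own remark it consists of a solution. A compactness (LaSalle-type) version of the same argument shows that in $\mathbb{R}^n$ every Popov orbit with $\lambda=\frac{1}{2L}$ converges to a solution. Hence neither your $2$-cycle nor your $4$-cycle on a strip can exist. Your one-dimensional attempt fails even more directly: $u^{k+1}=u^k$ forces $v^{k+1}=P_K(u^k-\lambda F(v^k))=u^{k+1}$. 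Separately, the claim that the same orbit serves for the forward-reflected-backward method is unjustified, since the two methods differ once $P_K$ is active.

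\textbf{Why the paper's proof does not help.} Your observation that a constant orbit must consist of solutions is correct, and it is exactly what defeats the paper's own argument. The paper takes the rotation $F(x_1,x_2)=(-x_2,x_1)$ on $K=\{x_1\ge 0\}$ with $u^0=v^0=(0,-\gamma)$ and notes that the iteration is stationary. But $F(0,-\gamma)=(\gamma,0)$ and $\langle(\gamma,0),w-(0,-\gamma)\rangle=\gamma w_1\ge 0$ for all $w\in K$. So $(0,-\gamma)$ is itself a solution: the solution set is the ray $\{(0,b): b\le 0\}$, not $\{0\}$. The computation also never uses $\lambda=\frac{1}{2L}$; it works for every $\lambda>0$. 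So in the finite-dimensional setting where both you and the paper work, the statement is false, and no refinement of Step 3 will rescue it. To show that $\frac{1}{2L}$ cannot be enlarged, you would need examples with $\lambda>\frac{1}{2L}$, where $\Psi$ is no longer monotone.
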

\begin{proof}
    Let $K$ be the half-space in $\mathbb{R}^2$ defined by $$ K: = \{u:=(x_1, x_2) \in \mathbb{R}^2, x_1 \ge 0 \}
    $$ 
    and let $F$ be a rotation operator defined by $F(x_1, x_2) := (-x_2, x_1)$, which is monotone and Lipschitz continuous with $L=1$. The problem \eqref{vi} has a unique solution $u^* = (0,0)$. Let $\lambda = 0.5 = \frac{1}{2L}$ and $u^0=v^0 = (0, -\gamma)$ for some $\gamma >0$. Then the Popov algorithm \eqref{PP} computes the next iteration as 
\begin{equation*} 
    \begin{cases}
  u^{1} = P_K (u^0 -0.5 F(v^0))=P_K (-0.5\gamma,-\gamma) = (0, -\gamma) = u^0,\\
  v^{1} = P_K(u^{1}-0.5 F(v^0))=(0,-\gamma) = v^0,  
\end{cases}
\end{equation*}
which show that the iterations $\{u^k\}$ do not converge to the solution $u^*$.
\qed

\end{proof}

The same example above also shows the divergence of the forward-reflected-backward algorithm \eqref{FrB}. Indeed, let $\lambda = 0.5 = \frac{1}{2L}$ and $u^0=u^1 = (0, -\gamma)$ for some $\gamma >0$. Then \eqref{FrB} computes the next iteration as 
$$
u^{2} = P_K (u^1 - F(u^1)+0.5 F(u^0))=P_K (-0.5\gamma,-\gamma) = (0, -\gamma) = u^1=u^0.
$$

\subsection{Larger stepsize and tight upper bound for unconstrained VI}
We establish some properties of the iterative sequences $\{u^k\}$ and $\{v^k\}$ generated by unconstrained Popov algorithm \eqref{PPu}.
\begin{Proposition}\label{SequenceProperty} 
Suppose that Assumptions (A1-A3) hold.
For any solution $u^* \in S^*$ and  for every  $k \ge 2 $, let
$\alpha = \lambda^2L^2$, 
$$
A_k := \|u^k-u^*\|^2-\|u^{k-1} - v^{k-1}\|^2+2\alpha \|v^{k-1} - v^{k-2}\|^2
$$
and 
$$
B_k:= (1-3\alpha)\|u^{k+1}-v^k\|^2
+\frac{2}{3}(1-3\alpha)\|v^{k}-v^{k-1}\|^2,
$$
then 
\begin{equation}\label{Main_Inequality}
A_{k+1} \le A_k - B_k.
\end{equation}
\end{Proposition}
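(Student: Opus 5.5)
The plan is to work entirely with the unconstrained recursion \eqref{PPu}, written as $u^{k+1}=u^k-\lambda F(v^k)$, $v^k=2u^k-u^{k-1}$. From it I extract three identities:
\[
v^k-u^k=u^k-u^{k-1}=-\lambda F(v^{k-1}),\qquad
u^{k+1}-v^k=\lambda\bigl(F(v^{k-1})-F(v^k)\bigr).
\]
The second one, together with (A2), gives the key Lipschitz bound $\|u^{k+1}-v^k\|^2\le \alpha\|v^k-v^{k-1}\|^2$. Shifting the index by one gives the same bound $\|u^k-v^{k-1}\|^2\le\alpha\|v^{k-1}-v^{k-2}\|^2$, which is valid for $k\ge2$.

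\textbf{Step 1 (one-step estimate).} I expand
\[
\|u^{k+1}-u^*\|^2=\|u^k-u^*\|^2-\|u^{k+1}-u^k\|^2-2\lambda\langle F(v^k),u^{k+1}-u^*\rangle .
\]
I then split $u^{k+1}-u^*=(v^k-u^*)+(u^{k+1}-v^k)$. The piece $\langle F(v^k),v^k-u^*\rangle\ge0$ by (A3). For the remaining piece I write $\lambda F(v^k)=\lambda(F(v^k)-F(v^{k-1}))+(u^k-v^k)$ and use polarization:
\[
-2\langle u^k-v^k,u^{k+1}-v^k\rangle=\|u^{k+1}-u^k\|^2-\|u^k-v^k\|^2-\|u^{k+1}-v^k\|^2 .
\]
The term $\|u^{k+1}-u^k\|^2$ cancels, and Cauchy--Schwarz with (A2) yields
\[
\|u^{k+1}-u^*\|^2\le\|u^k-u^*\|^2-\|u^k-v^k\|^2-b^2+2\sqrt{\alpha}\,ab,
\]
where $a=\|v^k-v^{k-1}\|$ and $b=\|u^{k+1}-v^k\|$.

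\textbf{Step 2 (handling the residual terms of $A_k$).} Writing out $A_{k+1}-A_k$ with Step 1 leaves the combination $-2\|u^k-v^k\|^2+\|u^{k-1}-v^{k-1}\|^2$. Set $p=v^k-u^k$ and $q=v^{k-1}-u^{k-1}$. From the identities above,
\[
v^k-v^{k-1}=2p-q,\qquad u^k-v^{k-1}=p-q .
\]
The parallelogram-type identity $2\|p-q\|^2-\|2p-q\|^2=-2\|p\|^2+\|q\|^2$ then converts this combination into
\[
2\|u^k-v^{k-1}\|^2-a^2\le 2\alpha\|v^{k-1}-v^{k-2}\|^2-a^2,
\]
using the shifted Lipschitz bound. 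The term $2\alpha\|v^{k-1}-v^{k-2}\|^2$ exactly cancels the $-2\alpha\|v^{k-1}-v^{k-2}\|^2$ coming from $A_k$. This is why the weight $2\alpha$ appears in $A_k$.

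\textbf{Step 3 (closing the quadratic form).} What remains is
\[
A_{k+1}-A_k\le -a^2-b^2+2\sqrt{\alpha}\,ab+2\alpha a^2,
\]
and I must show this is at most $-B_k=-(1-3\alpha)b^2-\tfrac23(1-3\alpha)a^2$. Rearranging, this is equivalent to
\[
\tfrac13a^2+3\alpha b^2-2\sqrt\alpha\,ab=\Bigl(\tfrac{a}{\sqrt3}-\sqrt{3\alpha}\,b\Bigr)^2\ge0,
\]
which holds trivially. The main obstacle is Step 2: finding the algebraic identity that turns the mismatched residual norms $\|u^k-v^k\|$ and $\|u^{k-1}-v^{k-1}\|$ into the quantities $\|v^k-v^{k-1}\|$ and $\|u^k-v^{k-1}\|$ that the Lipschitz bound controls. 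Once that identity is in hand, the rest is the elementary completion of the square in Step 3.
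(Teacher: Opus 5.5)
Your proposal is correct and follows essentially the same route as the paper. Both arguments use the same three ingredients:
\begin{itemize}
\item the one-step estimate obtained from (A3) together with the Lipschitz bound on $\lambda(F(v^k)-F(v^{k-1}))$;
\item the identity $\|v^k-v^{k-1}\|^2=2\|u^k-v^{k-1}\|^2-\|u^{k-1}-v^{k-1}\|^2+2\|u^k-v^k\|^2$, which is exactly your $p,q$ parallelogram identity;
\item the Young-type split $2\sqrt{\alpha}\,ab\le\frac13a^2+3\alpha b^2$, which you phrase as completing a square.
\end{itemize}
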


\begin{proof}
Let $u^* \in S^*$,   
since $u^{k+1}=u^k-{\lambda}F(v^k)$ we can write
\begin{align}\label{mainEst}
\begin{split}
\|u^{k+1}-u^*\|^2
&=\|u^k-\lambda{F(v^k)}-u^*\|^2-\|u^k-\lambda F(v^k)-u^{k+1}\|^2\\
&=\|u^k-u^*\|^2-\|u^k-u^{k+1}\|^2-2\lambda\langle F(v^k),u^{k+1}-u^*\rangle\\
&=\|u^k-u^*\|^2-\|u^k-v^k\|^2-\|v^k-u^{k+1}\|^2-2\langle u^k-v^k,v^k-u^{k+1}\rangle\\
&\quad-2\lambda\langle F(v^k),u^{k+1}-u^*\rangle.
\end{split}
\end{align}
By assumption (A3)
$$
\langle F(v^k),v^k-u^*\rangle\geq 0, 
$$
which implies
\begin{equation}\label{pseudo}
\langle F(v^k),v^k-u^{k+1}\rangle\geq -\langle F(v^k),u^{k+1}-u^*\rangle.
\end{equation}
Combining \eqref{pseudo} with \eqref{mainEst} we get
\begin{align}\label{mainv}
	\|u^{k+1}-u^*\|^2&\leq
	\|u^k-u^*\|^2-\|u^k-v^{k}\|^2-\|v^k-u^{k+1}\|^2-2\langle u^k-\lambda F(v^k)-v^k,v^k-u^{k+1}\rangle\notag\\
	&=\|u^k-u^*\|^2-\|u^k-v^k\|^2-\|v^k-u^{k+1}\|^2+2\lambda \langle F(v^k)-F(v^{k-1}),v^k-u^{k+1}\rangle.
\end{align}
We estimate the last term 
in the above inequality as follows
\begin{align}\label{mainv2}
	2\lambda \langle F(v^k)-F(v^{k-1}),v^k-u^{k+1}\rangle 
	&\leq 2\lambda\|F(v^k)-F(v^{k-1})\|\|v^k-u^{k+1}\|\notag\\
	&\leq 2\lambda L\|v^k-v^{k-1}\|\|v^k-u^{k+1}\|\notag\\
	&\leq \frac{1}{3} \|v^k-v^{k-1}\|^2 
	+ 3 \lambda^2 L^2\|v^k-u^{k+1}\|^2\notag\\\
 &= \frac{1}{3} \|v^k-v^{k-1}\|^2 
	+ 3 \alpha\|v^k-u^{k+1}\|^2.
\end{align}
%{\color{red} Up to here, it holds for constrained VIP as well.\\}
On the other hand we have
\begin{eqnarray} \label{break}
 \|v^k-v^{k-1}\|^2 
 &= & 2 \|u^k-v^{k-1}\|^2 -  \|u^{k-1}-v^{k-1}\|^2 +2 \|u^k-u^{k-1}\|^2  \\\nonumber
  &= & 2 \|(u^{k-1}-\lambda F(v^{k-1}))-(u^{k-1}-\lambda F(v^{k-2}))\|^2 -  \|u^{k-1}-v^{k-1}\|^2 +2 \|u^k-v^{k}\|^2  \\\nonumber
  &\le & 2 \lambda^2\| F(v^{k-1}))- F(v^{k-2})\|^2 -  \|u^{k-1}-v^{k-1}\|^2 +2 \|u^k-v^{k}\|^2  \\\nonumber
 &\le & 2 \alpha \|v^{k-1}-v^{k-2}\|^2 -  \|u^{k-1}-v^{k-1}\|^2 +2 \|u^k-v^{k}\|^2.\nonumber
\end{eqnarray}
%{\color{red} What we do not have in the constrained case is the first equality above!\\}
Hence 
\begin{eqnarray} \label{mainv4}
 \frac{1}{3} \|v^k-v^{k-1}\|^2 
 &=& \|v^k-v^{k-1}\|^2 - \frac{2}{3} \|v^k-v^{k-1}\|^2\\
 &\le & 2 \alpha \|v^{k-1}-v^{k-2}\|^2 -  \|u^{k-1}-v^{k-1}\|^2 +2 \|u^k-v^{k}\|^2 - \frac{2}{3} \|v^k-v^{k-1}\|^2. \nonumber
\end{eqnarray}
Combining \eqref{mainv}, \eqref{mainv2} and \eqref{mainv4} we obtain
\begin{equation*}\label{mainv3}
	\begin{split}
		\|u^{k+1}-u^*\|^2
  &\leq \|u^k-u^*\|^2
		-(1-3\alpha)\|v^k - u^{k+1}\|^2 -\|u^{k-1}-v^{k-1}\|^2\\
    &+\|u^k-v^k\|^2+2\alpha \|v^{k-1}-v^{k-2}\|^2
    - \frac{2}{3} \|v^k-v^{k-1}\|^2,
	\end{split}
\end{equation*}
which implies 
\eqref{Main_Inequality}.
\qed
\end{proof}
\begin{Remark}
    For constrained case, the estimations in the proof of Proposition \ref{SequenceProperty} still hold up to inequality \eqref{mainv2}. The analysis is broken down at equality \eqref{break}. 
\end{Remark}

\begin{Remark}
  By a similar argument, we can prove that inequality \eqref{Main_Inequality} also holds with
$$
A_k := \|u^k-u^*\|^2+ 3\alpha \|u^{k} - v^{k-1}\|^2+3\alpha^2 \|v^{k-1} - v^{k-2}\|^2 - 3\alpha \|u^{k-1} - v^{k-1}\|^2
$$
and 
$$
B_k:= (1-3\alpha)\|u^{k+1}-v^k\|^2
+(1-3\alpha)\|u^{k}-v^{k}\|^2 +\alpha (1-3\alpha)\|v^{k}-v^{k-1}\|^2.
$$
\end{Remark}
We are now in a position to present the first main result of this section. 
From \eqref{Main_Inequality}, to guarantee that the sequence $\{A_k\}$ is non-increasing, we need to require that $B_k$ is positive, or equivalently $\lambda < \frac{1}{\sqrt{3}L}$. 
The sequence $\{A_k\}$ will play an important role  as a Lyapunov-type function. % (non-negative and non-increasing).
%Nevertheless, since $A_k$ is not necessarily non-negative,  the traditional proof using Opial's Lemma will not work. Instead, we will use the induction technique to prove that $\{A_k\}$ is bounded.

%\begin{Lemma}
%If $\lambda < \frac{1}{\sqrt{3}L}$, then the sequences $\{u^k\}$ and $\{v^k\}$ generated by Popov's Algorithm  are bounded.
%\end{Lemma}

\begin{Theorem}
Suppose that Assumptions (A1-A3) hold and the operator $F$ is sequentially weakly continuous. 
Let $\lambda < \frac{1}{\sqrt{3}L}$ and the sequence $\{u^k\}$  generated by Popov's Algorithm be bounded. Then $\{u^k\}$  converges weakly to a solution of \eqref{ME}.
\end{Theorem}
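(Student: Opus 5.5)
The plan is to use Proposition \ref{SequenceProperty} to obtain summability of the residuals, and then run an Opial-type argument. Boundedness of $\{u^k\}$ is assumed, which is what replaces nonnegativity of $A_k$.

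\textbf{Step 1 (summability and vanishing residuals).} Since $\lambda<\frac{1}{\sqrt3 L}$, we have $1-3\alpha>0$ and hence $B_k\ge 0$. By \eqref{Main_Inequality}, $\{A_k\}$ is nonincreasing. Because $\{u^k\}$ is bounded, so is $v^k=2u^k-u^{k-1}$, and therefore $\{A_k\}$ is bounded below. Thus $A_k$ converges and $\sum_k B_k\le A_2-\inf_k A_k<\infty$. This gives $\|u^{k+1}-v^k\|\to0$ and $\|v^k-v^{k-1}\|\to0$. Since $u^{k+1}-v^k=u^{k+1}-2u^k+u^{k-1}$, we have $u^k-v^k=u^{k-1}-u^k$, and also $\lambda F(v^k)=u^k-u^{k+1}$. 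I would then show $\|u^k-u^{k-1}\|\to0$ using
\[
u^k-v^k=(u^k-v^{k-1})+(v^{k-1}-v^k)\quad\text{and}\quad u^k-v^{k-1}=-(u^{k-1}-v^{k-1})+(u^k-u^{k-1}).
\]
The cleaner route is to note $\|u^{k+1}-u^k\|=\lambda\|F(v^k)\|$ and
\[
\lambda\bigl(F(v^k)-F(v^{k-1})\bigr)=(u^k-u^{k+1})-(u^{k-1}-u^k)=-(u^{k+1}-v^k).
\]
This only gives consecutive differences. Instead I would use $\|v^k-v^{k-1}\|=\|2(u^k-u^{k-1})-(u^{k-1}-u^{k-2})\|\to0$. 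Writing $d_k=u^k-u^{k-1}$, we have $2d_k-d_{k-1}\to0$, so $d_k=\tfrac12 d_{k-1}+o(1)$, which forces $d_k\to0$. Hence $F(v^k)\to0$, $\|u^k-v^k\|\to0$, and $\|u^k-u^{k-1}\|\to0$.

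\textbf{Step 2 (weak cluster points are solutions).} Let $u^{k_j}\rightharpoonup \bar u$. Then $v^{k_j}\rightharpoonup\bar u$ as well. By sequential weak continuity, $F(v^{k_j})\rightharpoonup F(\bar u)$. Since $F(v^{k_j})\to0$ strongly, $F(\bar u)=0$, so $\bar u\in S^*$.

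\textbf{Step 3 (Opial).} For each $u^*\in S^*$, $\lim_k A_k(u^*)$ exists. The correction terms $-\|u^{k-1}-v^{k-1}\|^2+2\alpha\|v^{k-1}-v^{k-2}\|^2$ tend to zero by Step 1, so $\lim_k\|u^k-u^*\|$ exists for every $u^*\in S^*$. Together with Step 2, Opial's lemma gives weak convergence of the whole sequence $\{u^k\}$ to a point of $S^*$. Uniqueness of the cluster point follows from the standard identity
\[
2\langle u^k,u_1^*-u_2^*\rangle=\|u^k-u_2^*\|^2-\|u^k-u_1^*\|^2+\|u_1^*\|^2-\|u_2^*\|^2 .
\]

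\textbf{Main obstacle.} The delicate step is Step 1: extracting $\|u^k-u^{k-1}\|\to0$ from the summability of $B_k$. This is handled by the contraction-type recursion $d_k=\tfrac12 d_{k-1}+e_k$ with $e_k\to0$, which gives $\limsup_k\|d_k\|\le \tfrac12\limsup_k\|d_k\|$ and hence $\|d_k\|\to0$. The use of boundedness of $\{u^k\}$ to bound $A_k$ from below must also be justified, since $A_k$ contains a negative term.
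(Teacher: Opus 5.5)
Your proposal is correct and follows the same route as the paper. The Lyapunov sequence $A_k$ from Proposition~\ref{SequenceProperty} is nonincreasing and, by boundedness of $\{u^k\}$, bounded below, so $B_k\to 0$. The rest is the standard Opial-type argument that the paper delegates to Theorem 3.1 of Vuong's work; you write it out explicitly, including the step the paper leaves implicit, namely that the correction terms in $A_k$ vanish so that $\lim_k\|u^k-u^*\|$ exists.

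As a minor simplification, your first decomposition $u^k-v^k=(u^k-v^{k-1})+(v^{k-1}-v^k)$ already gives $\|u^k-v^k\|\to 0$ directly from $B_{k-1}\to 0$. The contraction recursion for $d_k$ is therefore valid but unnecessary.
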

\begin{proof}
Under the assumptions made and Proposition \ref{SequenceProperty}, the sequence $\{A_k\}$ is decreasing and bounded, and hence is convergent. As a consequence, we can deduce 
$\lim_{k \to \infty} B_k = 0$. The reminder of the proof is similar to that of \cite[Theorem 3.1]{Vuong} and we skip it for brevity.

\qed
\end{proof}

\begin{Remark}
    As discussed in \cite[Remark 3.4]{Vuong}, the weak sequential continuity of $F$ can be relaxed. Moreover, it can be removed if $F$ is monotone. In addition, when $F$ is strongly pseudo-monotone, one can deduce the linear convergence of $\{u^k\}$ to the unique solution $u^*$ thanks to an error bound ( see \cite[Proposition 4.1 ]{Vuong}).
\end{Remark}

%\subsection{The tightness of the upper bound $\frac{1}{\sqrt{3}L}$}
In the next result, we will provide an example to show that the upper bound  $\frac{1}{\sqrt{3}L}$ of the stepsize $\lambda$ cannot be enlarged.
\begin{Theorem}
The upper bound 
$\frac{1}{\sqrt{3}L}$ in the Popov's Algorithm is tight.
\end{Theorem}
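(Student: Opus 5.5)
We will construct a linear monotone example in $\mathbb{R}^2$ on which the unconstrained Popov iteration \eqref{PPu} fails to converge when $\lambda = \frac{1}{\sqrt{3}L}$. Take the rotation operator $F(x_1,x_2) = (-x_2, x_1)$, that is, $F(u) = Ju$ with $J$ the rotation by $\pi/2$. It is monotone (indeed skew, since $\langle Ju,u\rangle = 0$), Lipschitz with $L=1$, and $S^* = \{0\}$. Identify $\mathbb{R}^2$ with $\mathbb{C}$, so that $F(z) = iz$. The recursion \eqref{PPu} then becomes the linear two-step recurrence
$$
z^{k+1} = z^k - i\lambda\,(2z^k - z^{k-1}) = (1-2i\lambda)z^k + i\lambda z^{k-1}.
$$
Its characteristic polynomial is
$$
p(\mu) = \mu^2 - (1-2i\lambda)\mu - i\lambda .
$$

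The plan is to analyse the roots of $p$ and show that at $\lambda = 1/\sqrt{3}$ one root lies on the unit circle. A natural first guess is a root of the form $\mu = e^{i\theta}$. Rather than solving the complex quadratic directly, substitute $\mu = e^{i\theta}$ into $p(\mu) = 0$ and divide by $\mu$:
$$
e^{i\theta} - 1 + 2i\lambda - i\lambda e^{-i\theta} = 0 .
$$
Separating real and imaginary parts gives two equations:
$$
\cos\theta - 1 - \lambda\sin\theta = 0, \qquad \sin\theta + 2\lambda - \lambda\cos\theta = 0 .
$$
From the first, $\lambda = (\cos\theta - 1)/\sin\theta = -\tan(\theta/2)$. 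Write $t = \tan(\theta/2)$, so $\lambda = -t$. Using $\sin\theta = 2t/(1+t^2)$ and $\cos\theta = (1-t^2)/(1+t^2)$, the second equation becomes
$$
2t - 2t(1+t^2) + t(1-t^2) = 0 \quad\Longleftrightarrow\quad t(1 - 3t^2) = 0,
$$
so $t^2 = 1/3$ and $\lambda = 1/\sqrt{3}$ with $t = -1/\sqrt{3}$. This gives $\theta = -\pi/3$, i.e. $\mu_1 = e^{-i\pi/3}$. Note that $\mu_1^6 = 1$, so the corresponding solution is periodic.

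Since the product of the roots is $-i\lambda$, the other root is $\mu_2 = -i\lambda/\mu_1$, with $|\mu_2| = \lambda = 1/\sqrt{3} < 1$. Hence the two roots are distinct, and choosing initial data $z^0 = 1$, $z^1 = \mu_1$ produces the exact solution $z^k = \mu_1^k$. Then $|u^k| = 1$ for all $k$, so $u^k$ cycles through six points of the unit circle and does not converge to $u^* = 0$. The same holds for $v^k = 2u^k - u^{k-1}$. For the Popov form \eqref{PP} with $K = \HH$, this corresponds to $u^0 = (1,0)$ and $v^0 = 2u^0 - u^{-1}$ with $u^{-1} = \mu_1^{-1}$; equivalently, we start the recurrence from $u^0, u^1$ given as real vectors.

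The only delicate step is locating this boundary root. The trigonometric substitution above reduces it to the equation $t(1-3t^2) = 0$, which settles it cleanly. Finally, we verify directly that $\mu_1 = e^{-i\pi/3}$ satisfies $p(\mu_1) = 0$ at $\lambda = 1/\sqrt{3}$. Using $\mu_1^2 = e^{-2i\pi/3}$, the real and imaginary parts of $p(\mu_1)$ are
$$
-\tfrac12 - \tfrac12 - \tfrac{2}{\sqrt3}\cdot\tfrac{\sqrt3}{2} + \ldots,
$$
and a careful evaluation of these short arithmetic expressions confirms both vanish. This shows the iteration does not converge at $\lambda = \frac{1}{\sqrt{3}L}$, so the stepsize bound cannot be enlarged.
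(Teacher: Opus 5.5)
Your proof is correct and is essentially the paper's argument: the same rotation $F(x_1,x_2)=(-x_2,x_1)$, the same identification with $\mathbb{C}$, and the same characteristic polynomial $\mu^2-(1-2i\lambda)\mu-i\lambda$, which at $\lambda=1/\sqrt{3}$ has the unit-modulus root $e^{-i\pi/3}$ (the paper finds it from the quadratic formula with discriminant $1-4\lambda^2$; you use a half-angle substitution), and your explicit data $z^0=1$, $z^1=\mu_1$, giving an exact $6$-periodic orbit, is more careful than the paper's claim that every nonzero initial datum fails, which is false for data lying on the root of modulus $1/\sqrt{3}$. Your closing ``direct verification'' is garbled (the displayed real part has a sign error in its last term and trails off with an ellipsis), but it is redundant, since your reduction to $t(1-3t^2)=0$ is reversible and already proves $p(e^{-i\pi/3})=0$.
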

\begin{proof}
    Let $K=H=\mathbb{R}^2$ and $F(x_1, x_2) = (-x_2, x_1)$ which is monotone and Lipschitz continuous with $L=1$. The problem \eqref{ME} has unique solution $u^* = (0,0)$. 
The Popov iterations can be written as 
\[
u^{k+1}=
u^k-\lambda\,M\bigl(2 u^k - u^{k-1}\bigr),
\qquad
M=\begin{pmatrix}0&1\\-1&0\end{pmatrix},
\]
with initial data \(u^0, u^1
\not= 0\).  We will show that when 
\(\lambda = 1/\sqrt3\),
$\|u^k\| \not \to 0$.
Identify \(\mathbb{R}^2\) with \(\mathbb{C}\) via
\[
(x_1,x_2)\;\longleftrightarrow\;z = x_1 + i x_2.
\]
Under this identification the operator \(M\) acts by multiplication by \(i\).  
%Writing
%\[z_n \;=\; x_n^{(1)} \;+\; i\,x_n^{(2)},\]
The Popov iteration becomes the scalar complex recursion
\[
z^{k+1}
\;=\;
(1 - 2\,i\lambda)\,z^k
\;+\;
i\lambda\,z^{k-1},
\]
which has a characteristic polynomial 
\[
\mu^2 \;-\;(1-2\,i\lambda)\,\mu \;-\;i\lambda
\;=\;0.
\]
Its 
%discriminant is
%\[
%\Delta
%=\bigl(1-2\,i\lambda\bigr)^2 + 4\,i\lambda
%=1 - 4\lambda^2 \in\mathbb{R}.
%\]
%Hence the two 
roots are
\[
\mu_{1,2}
=\frac{\,1 - 2\,i\lambda \;\pm\;\sqrt{\,1 - 4\lambda^2\,}\,}{2}.
\] 
It is a standard fact 
%(see, e.g., Elaydi, \emph{An Introduction to Difference Equations}, Theorem 2.35, 2.36)
that \(\|u^k\|\to0\) for every choice of initial data if and only if
\[
\max\!\bigl\{|\mu_1|,\;|\mu_2|\bigr\}\;<\;1.
\]
When \(\lambda=1/\sqrt3\), we have
\[
\mu_{1,2}
=\frac{\,1 - 2\,i/\sqrt3 \;\pm\; i/\sqrt3\,}{2}.
\]
Hence \(\max\{|\mu_1|,|\mu_2|\}=1\).  
Therefore, the general solution
\[
u^k = A\,\mu_1^k + B\,\mu_2^k
\]
never converges to zero (unless \(A=B=0\), i.e.\ the trivial solution).  Equivalently, for \(\lambda=1/\sqrt3\) and any nonzero initial data \(u^0,u^1\), the norms \(\|u^k\|\) do not converge to $0$.

In the figure below, we plot the iterations generated by the Popov algorithm with $\lambda = \frac{1}{\sqrt{3}L}$ and $u^0 = v^0 = (5,5)$ for $1000$ iterations.
It is clear that the Popov's iterations do not converge to the solution $u^*=0$.

\begin{figure}[ht]
\centering
\includegraphics {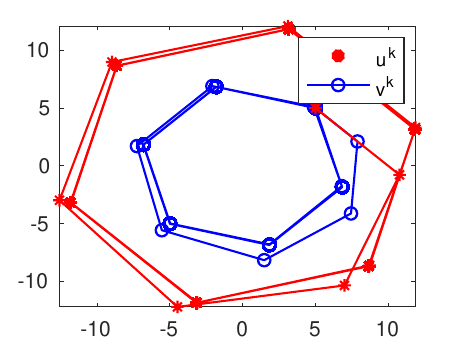}\\ 
\label{fig1}
\end{figure}

\qed
\end{proof}

\section{Conclusion}
We establish new and tight upper bound on stepsize for the Popov's algorithm and its variants for solving (un)-constrained variational inequalities. The obtained results provide answers for open questions posted in \cite{Vuong,Xu}.

\end{document}